\newtheorem{claim}{Claim}
\newtheorem{theorem}{Theorem}
\newtheorem{lemma}[theorem]{Lemma}
\newtheorem{question}[theorem]{Question}
\newtheorem{corollary}[theorem]{Corollary}
\theoremstyle{remark}
\newtheorem*{remark}{Remark}
\DeclareMathOperator{\diam}{diam}
\DeclareMathOperator{\dist}{dist}
\begin{document}
\title{On regular hypergraphs of high girth}

\author{David Ellis\footnote{Research supported in part by a Feinberg Visiting Fellowship from the Weizmann Institute of Science.}\\
\small School of Mathematical Sciences\\[-0.8ex]
\small Queen Mary, University of London\\[-0.8ex] 
\small UK\\
\small\tt d.ellis@qmul.ac.uk\\
\and
Nathan Linial\\
\small School of Computer Science and Engineering\\[-0.8ex]
\small Hebrew University of Jerusalem\\[-0.8ex]
\small Israel\\
\small\tt nati@cs.huji.ac.il
}

% \date{\dateline{submission date}{acceptance date}\\
% \small Mathematics Subject Classifications: comma separated list of
% MSC codes available from http://www.ams.org/mathscinet/freeTools.html}

\date{\footnotesize{1st November 2013 (updated 12th July 2017)}}
\maketitle

\begin{abstract}
We give lower bounds on the maximum possible girth of an $r$-uniform, $d$-regular hypergraph with at most $n$ vertices, using the definition of a hypergraph cycle due to Berge. These differ from the trivial upper bound by an absolute constant factor (viz., by a factor of between $3/2+o(1)$ and $2 +o(1)$). We also define a random $r$-uniform `Cayley' hypergraph on the symmetric group $S_n$ which has girth $\Omega (n^{1/3})$ with high probability, in contrast to random regular $r$-uniform hypergraphs, which have constant girth with positive probability.
\end{abstract}

\section{Introduction}
The {\em girth} of a finite graph $G$ is the shortest length of a cycle in $G$. (If $G$ is acyclic, we define its girth to be $\infty$.) The {\em girth problem} asks for the minimum possible number of vertices $n(g,d)$ in a $d$-regular graph of girth at least $g$, for each pair of integers $d,g \geq 3$. Equivalently, for each pair of integers $n,d \geq 3$ with $nd$ even, it asks for a determination of the largest possible girth $g_{d}(n)$ of a $d$-regular graph on at most $n$ vertices.

The girth problem has received much attention for more than half a century, starting with Erd\H{o}s and Sachs \cite{erdossachs}. A fairly easy probabilistic argument shows that for any integers $d,g \geq 3$, there exist $d$-regular graphs with girth at least $g$. An extremal argument due to Erd\H{o}s and Sachs \cite{erdossachs} then shows that there exists such a graph with at most
$$2\frac{(d-1)^{g-1}-1}{d-2}$$
vertices. This implies that
\begin{equation}\label{eq:erdos-sachs}g_{d}(n) \geq (1- o(1)) \log_{d-1}n.\end{equation}
(Here, and below, $o(1)$ stands for a function of $n$ that tends to zero as $n \to \infty$.)

On the other hand, if $G$ is a $d$-regular graph of girth at least $g$, then counting the number of vertices of $G$ of distance less than $g/2$ from a fixed vertex of $G$ (when $g$ is odd), or from a fixed edge of $G$ (when $G$ is even), immediately shows that
$$|G| \geq n_0(g,d) := \left\{\begin{array}{lll} 1+d\sum_{i=0}^{k-1}(d-1)^i & = 1+d\frac{(d-1)^{k}-1}{d-2} & \textrm{if }g =2k+1;\\ 2\sum_{i=0}^{k-1}(d-1)^i & = 2\frac{(d-1)^{k}-1}{d-2} & \textrm{if }g=2k.\end{array}\right.$$
This is known as the {\em Moore bound}. Graphs for which the Moore bound holds with equality are known as {\em Moore graphs} (for odd $g$), or {\em generalized polygons} (for even $g$). It is known that Moore graphs only exist when $g=3$ or $5$, and generalized polygons only exist when $g = 4,6,8$ or $12$. It was proved in \cite{bannai,biggs,kovacs} that if $d \geq 3$, then
$$n(g,d) \geq n_0(g,d)+2\quad \textrm{for all } g \notin \{3,4,5,6,8,12\};$$
even for large values of $g$ and $d$, no improvement on this is known.

A related problem is to give an explicit construction of a $d$-regular graph of girth $g$, with as few vertices as possible. The celebrated Ramanujan graphs constructed by Lubotzsky, Phillips and Sarnak \cite{lps}, Margulis \cite{margulis} and Morgenstern \cite{morgenstern} constituted a breakthrough on both problems, implying that
\begin{equation}\label{eq:lps} g_{d}(n) \geq (4/3 - o(1))\log_{d-1}n\end{equation}
via an explicit (algebraic) construction, whenever $d =q+1$ for some odd prime power $q$.

One can obtain from this a lower bound on $g_{d}(n)$ for arbitrary $d \geq 3$, by choosing the minimum $d' \geq d$ such that $d' - 1$ is an odd prime power, taking a $d'$-regular Ramanujan graph with girth achieving (\ref{eq:lps}), and removing $d'-d$ perfect matchings in succession. This yields 
\begin{equation}\label{eq:lps2} g_{d}(n) \geq (4/3 - o(1))\frac{\log(d-1)}{\log(d'-1)} \log_{d-1}n.\end{equation}

In \cite{luw1} and \cite{luw2}, Lazebik, Ustimenko and Woldar give different explicit constructions (also algebraic), which imply that 
$$g_{d}(n) \geq (4/3 - o(1))\log_{d}n$$
whenever $d$ is an odd prime power, implying (\ref{eq:lps2}) whenever $d-1$ is not an odd prime power. (In fact, their constructions provide the best known upper bound on $n(g,d)$ for many pairs of values $(g,d)$.) Combining (\ref{eq:lps2}) with the Moore bound gives
\begin{equation}\label{eq:upper} (4/3 - o(1))\frac{\log(d-1)}{\log(d'-1)} \log_{d-1}n \leq g_{d}(n) \leq (2+o(1)) \log_{d-1}n.\end{equation}
Improving the constants in (\ref{eq:upper}) seems to be a very hard problem.

In this paper, we investigate an analogue of the girth problem for $r$-uniform hypergraphs, where $r \geq 3$. There are several natural notions of a cycle in a hypergraph. We refer the reader to Section~\ref{sec:open} for a brief discussion of some other interesting notions of girth in hypergraphs, and to \cite{duke} for a detailed treatise. Here, we consider the least restrictive notion, originally due to Berge (see for example \cite{berge} and \cite{berge2}). \begin{comment} Namely, if $l \geq 2$, a cycle of length $l$ is a sequence of $l$ edges $(e_1,\ldots,e_l)$  such that there exist $l$ distinct vertices $v_1,\ldots,v_l$ with $v_i \in e_i \cap e_{i+1}$ for all $i$ (where we define $e_{l+1} := e_1$). \end{comment}

A {\em hypergraph} $H$ is a pair of finite sets $(V(H),E(H))$, where $E(H)$ is a family of subsets of $V(H)$. The elements of $V(H)$ are called the {\em vertices} of $H$, and the elements of $E(H)$ are called the {\em edges} of $H$. A hypergraph is said to be {\em $r$-uniform} if all its edges have size $r$. It is said to be {\em $d$-regular} if each of its vertices is contained in exactly $d$ edges. It is said to be {\em linear} if any two of its edges share at most one vertex.

Let $u$ and $v$ be distinct vertices in a hypergraph $H$. A {\em u-v path} of {\em length} $l$ in $H$ is a sequence of distinct edges $(e_1,\ldots,e_l)$ of $H$, such that $u \in e_1$, $v \in e_l$, $e_i \cap e_{i+1} \neq \emptyset$ for all $i \in \{1,2,\ldots,l-1\}$, and $e_i \cap e_j = \emptyset$ whenever $j > i+1$ (Note that some authors call this a  {\em geodesic path}, and use the term {\em path} when non-consecutive edges are allowed to intersect.) The {\em distance} from $u$ to $v$ in $H$, denoted $\textrm{dist}(u,v)$, is the shortest length of a {\em u-v} path in $H$. (We define $\dist(v,v) = 0$.) The {\em ball of radius $R$ and centre $u$} in $H$ is the set of vertices of $H$ with distance at most $R$ from $u$. The {\em diameter} of a hypergraph $H$ is defined by
$$\diam(H) = \max_{u,v \in V(H)} \dist(u,v).$$

A hypergraph is said to be a {\em cycle} if it has at least two edges, and there is a cyclic ordering of its edges, $(e_1,\ldots,e_l)$ say, such that there exist distinct vertices $v_1,\ldots,v_l$ with $v_i \in e_i \cap e_{i+1}$ for all $i$ (where we define $e_{l+1} := e_1$). This notion of a hypergraph cycle is originally due to Berge, and is sometimes called a {\em Berge-cycle}. The {\em length} of a cycle is the number of edges in it. The {\em girth} of a hypergraph is the length of the shortest cycle it contains.

Observe that two distinct edges $e,f$ with $|e \cap f| \geq 2$ form a cycle of length 2 under this definition, so when considering hypergraphs of high girth, we may restrict our attention to linear hypergraphs.

We use the Landau notation for functions: if $F,G:\mathbb{N} \to \mathbb{R}^{+}$, we write $F = o(G)$ if $F(n)/G(n) \to 0$ as $n \to \infty$. We write $F = O(G)$ if there exists $C>0$ such that $F(n) \leq CG(n)$ for all $n$. We write $F = \Omega(G)$ if there exists $c>0$ such that $F(n) \geq cG(n)$ for all $n$. Finally, we write $F = \Theta(G)$ if $F = O(G)$ and $F = \Omega(G)$.

Extremal questions concerning Berge-cycles in hypergraphs have been studied by several authors. For example, in \cite{bg}, Bollob\'as and Gy\H{o}ri prove that an $n$-vertex, 3-uniform hypergraph with no 5-cycle has at most $\sqrt{2} n^{3/2} + \tfrac{9}{2}n$ edges, and they give a construction showing that this is best possible up to a constant factor. In \cite{lv}, Lazebnik and Verstra\"ete prove that a 3-uniform, $n$-vertex hypergraph of girth at least 5 has at most
$$\tfrac{1}{6}n \sqrt{n-\tfrac{3}{4}} + \tfrac{1}{12}n$$
edges, and give a beautiful construction (based on the so-called `polarity graph' of the projective plane $\textrm{PG}(2,q)$) showing that this is sharp whenever $n = q^2$ for an odd prime power $q \geq 27$. Interestingly, neither of these two constructions are regular.

In \cite{gl} and \cite{thesis}, Gy\"ori and Lemons consider the problem of excluding a cycle of length exactly $k$, for general $k \in \mathbb{N}$. In \cite{gl}, they prove that an $n$-vertex, 3-uniform hypergraph with no $(2k+1)$-cycle has at most $4k^2 n^{1+1/k}+O(n)$ edges. In \cite{thesis}, they prove that an $n$-vertex, $r$-uniform hypergraph with no $(2k+1)$-cycle has at most $C_{k,r}(n^{1+1/k})$ edges, and furthermore that an $n$-vertex, $r$-uniform hypergraph with no $(2k)$-cycle has at most $C_{k,r}'(n^{1+1/k})$ edges, where $C_{k,r}, C_{k,r}'$ depend upon $k$ and $r$ alone.

In this paper, we will investigate the maximum possible girth of an $r$-uniform, $d$-regular hypergraph on $n$ vertices, for $r$ and $d$ fixed and $n$ large. If $r \geq 3$ and $d \geq 2$, we let $g_{r,d}(n)$ denote the maximum possible girth of an $r$-uniform, $d$-regular hypergraph on at most $n$ vertices. Similarly, if $d \geq 2$ and $r,g \geq 3$, we let $n_r(g,d)$ denote the minimum possible number of vertices in an $r$-uniform, $d$-regular hypergraph with girth at least $g$. Since a non-linear hypergraph has girth 2, we may replace `hypergraph' with `linear hypergraph' in these two definitions.

In section 2, we will state upper and lower bounds on the function $g_{r,d}(n)$, which differ by an absolute constant factor. The upper bound is a simple analogue of the Moore bound for graphs, and follows immediately from known results. The lower bound is a hypergraph extension of a similar argument for graphs, due to Erd\H{o}s and Sachs \cite{erdossachs} --- not a particularly difficult extension, but still, in our opinion, worth recording.

In section 3, we consider the girth of certain kinds of random $r$-uniform hypergraph. We define a random $r$-uniform `Cayley' hypergraph on $S_n$ which has girth $\Omega (n^{1/3})$ with high probability, in contrast to random regular $r$-uniform hypergraphs, which have constant girth with positive probability. We conjecture that, in fact, our `Cayley' hypergraph has girth $\Omega(n \log n)$ with high probability. We believe it may find other applications.

\section{Upper and lower bounds}
In this section, we state upper and lower bounds on the function $g_{r,d}(n)$, which differ by an absolute constant factor.

We first state a very simple analogue of the Moore bound for linear hypergraphs. For completeness, we give the proof, although the result follows immediately from known results, e.g. from Theorem 1 of Hoory \cite{hoorybipartite}.
\begin{lemma}
Let $r,d$ and $g$ be integers with $d \geq 2$ and $r,g \geq 3$. Let $H$ be an $r$-uniform, $d$-regular, $n$-vertex hypergraph with girth $g$. If $g=2k+1$ is odd, then
\begin{equation}\label{eq:odd}n \geq 1+d(r-1)\sum_{i=0}^{k-1} ((d-1)(r-1))^i = 1+d(r-1)\frac{(d-1)^{k}(r-1)^{k}-1}{(d-1)(r-1)-1},\end{equation}
and if $g=2k$ is even, then
\begin{equation}\label{eq:even}n \geq r\sum_{i=0}^{k-1} ((d-1)(r-1))^i = r\frac{(d-1)^{k}(r-1)^{k}-1}{(d-1)(r-1)-1}.\end{equation}
\end{lemma}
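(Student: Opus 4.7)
The plan is a BFS counting argument, adapting the classical Moore bound for graphs to the hypergraph setting. I would pick a root --- a vertex $v_0 \in V(H)$ in the odd case, an edge $e_0 \in E(H)$ in the even case --- and count vertices at each BFS level. Since $H$ is $d$-regular and $r$-uniform, each non-root vertex in the BFS has $d-1$ ``forward'' edges, each contributing $r-1$ fresh vertices; linearity guarantees that distinct edges through a common vertex contribute disjoint sets of ``other'' vertices, so no double-counting occurs within a single branching step.

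For odd girth $g = 2k+1$, let $S_i$ denote the set of vertices at distance exactly $i$ from $v_0$. Then $|S_0| = 1$; since the $d$ edges through $v_0$ are pairwise disjoint outside $\{v_0\}$ by linearity, $|S_1| = d(r-1)$; and inductively for $1 \leq i \leq k-1$ each $u \in S_i$ lies in exactly one ``back'' edge (meeting $S_{i-1}$), while the remaining $d-1$ edges through $u$ each supply $r-1$ vertices to $S_{i+1}$, giving $|S_{i+1}| = (d-1)(r-1)|S_i|$. Summing $|S_0| + |S_1| + \cdots + |S_k|$ yields (\ref{eq:odd}). The even case proceeds identically with root $e_0$: set $T_0 := V(e_0)$, so $|T_0| = r$, and define $T_1,\ldots,T_{k-1}$ in the same way; the branching factor $(d-1)(r-1)$ per level gives (\ref{eq:even}). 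Starting from an edge rather than a vertex is precisely what saves the factor of $d/r$ one would otherwise expect to lose.

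The only substantive step is to check that the sets $S_i$ (resp.\ $T_i$) are pairwise disjoint and that the branching is genuinely a tree, i.e.\ to rule out ``collisions'' between BFS branches. A collision at depth $\leq k$ in the odd case (resp.\ $\leq k-1$ in the even case) would, after stripping a common prefix and using linearity to rule out repeated edges or coincident junction vertices, produce a genuine Berge-cycle of length at most $2k$ (resp.\ $2k-1$, including the root edge $e_0$), contradicting $g = 2k+1$ (resp.\ $g = 2k$). I expect this collision-to-Berge-cycle conversion to be the main place requiring care, since the Berge-cycle definition demands $l$ truly distinct junction vertices --- but this is routine once the roles of the linearity hypothesis and the girth hypothesis are kept straight.
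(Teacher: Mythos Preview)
Your approach is correct and is exactly the argument the paper uses: the paper's proof simply states that the right-hand side of (\ref{eq:odd}) is the number of vertices in a ball of radius $k$ around a vertex, and the right-hand side of (\ref{eq:even}) is the number of vertices within distance $k-1$ of a fixed edge. Your BFS expansion with the collision-to-Berge-cycle check is precisely the standard justification of those two sentences.
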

\begin{proof}
The right-hand side of (\ref{eq:odd}) is the number of vertices in any ball of radius $k$. The right-hand side of (\ref{eq:even}) is the number of vertices of distance at most $k-1$ from any fixed edge $e \in H$.
\end{proof}
The following corollary is immediate.
\begin{corollary}
\label{corr:upperbound}
Let $r,d$ and $g$ be integers with $d \geq 2$ and $r,g \geq 3$. Let $H$ be an $r$-uniform, $d$-regular hypergraph with $n$ vertices and girth $g$. Then
$$g \leq \frac{2\log n}{\log(r-1)+\log(d-1)}+2.$$
Hence,
$$g_{r,d}(n) \leq  \frac{2\log n}{\log(r-1)+\log(d-1)}+2.$$
\end{corollary}

Our aim is now to obtain a hypergraph analogue of the non-constructive lower bound (\ref{eq:erdos-sachs}). We first prove the following existence lemma.

\begin{lemma}
\label{lemma:lift}
For all integers $d \geq 2$ and $r,g \geq 3$, there exists a finite, $r$-uniform, $d$-regular hypergraph with girth at least $g$.
\end{lemma}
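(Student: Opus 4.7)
I will adapt the classical probabilistic Erd\H{o}s--Sachs existence argument from graphs to $r$-uniform hypergraphs. Fix $r \geq 3$, $d \geq 2$, $g \geq 3$, and let $n$ be a large integer with $r \mid nd$. Let $H_n$ be the random $r$-uniform $d$-regular (multi)hypergraph on $[n]$ drawn from the configuration model: each vertex is given $d$ ``half-edges'', and the $nd$ half-edges are partitioned uniformly at random into $nd/r$ groups of size $r$, each group becoming an edge.

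A first-moment calculation, analogous to the well-known one for random $d$-regular graphs, shows that for each fixed $\ell \in \{2, \ldots, g\}$,
\[
\mathbb{E}\bigl[\text{\# Berge cycles of length } \ell \text{ in } H_n\bigr] \;=\; O(1), \qquad \mathbb{E}\bigl[\text{\# non-linear edge pairs or degenerate edges in } H_n\bigr] \;=\; O(1),
\]
as $n \to \infty$, with the implicit constants depending only on $r, d, \ell$. In fact, by the method of moments, the joint distribution of these counts (over $\ell \leq g$) tends to that of independent Poisson random variables with finite means. Consequently, for $n$ sufficiently large, there is strictly positive probability that $H_n$ is simple (no repeated vertex in any edge), linear (no two edges share two or more vertices), and has no Berge cycle of length $\leq g$. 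Any realization in this event is a linear, $r$-uniform, $d$-regular hypergraph of girth $\geq g+1 > g$, proving the lemma.

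\textbf{Main obstacle.} The delicate step is the first-moment computation for Berge cycles. A cycle of length $\ell$ is specified by a cyclic sequence of $\ell$ distinct ``witness'' vertices $v_1, \ldots, v_\ell$ together with $\ell$ distinct edges $e_1, \ldots, e_\ell$, where $e_i$ contains $v_{i-1}$ and $v_i$ (indices mod $\ell$). There are $\Theta(n^\ell/\ell)$ cyclic choices of witness sequences; for each, the probability in the configuration model that there exist $\ell$ distinct edges realizing the prescribed incidences is $\Theta(n^{-\ell})$, with constants depending only on $r,d,\ell$. The product is $O(1)$, uniformly in $n$. The non-linearity and degeneracy counts are treated similarly. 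Poisson convergence of the joint count then follows from the standard method of moments, as in the classical random-regular-graph theory, and the positive probability of the all-zeros event concludes the proof.
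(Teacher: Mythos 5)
Your proposal is correct in outline, but it takes a genuinely different route from the paper's own argument --- in fact it is essentially the alternative proof that the paper itself points out at the start of Section 3, where Lemma 2 of Cooper, Frieze, Molloy and Reed \cite{cooper} (the configuration-model analysis of random $d$-regular, $r$-uniform hypergraphs) is quoted as giving $\textrm{Prob}\{\textrm{girth}(H) \geq g\} \to \exp(-\sum_{l<g}\lambda_l)/(1-\exp(-(\lambda_1+\lambda_2))) > 0$ with $\lambda_i = (r-1)^i(d-1)^i/(2i)$. The paper's own proof of the lemma is different: an induction on $g$, starting from the explicit linear, $d$-regular base case given by the axis-parallel lines in $\mathbb{Z}_r^d$, and at each step taking a girth-$(g-1)$ hypergraph with the fewest $(g-1)$-cycles, $M$ say, and applying a random $2$-cover; a parity argument shows each $(g-1)$-cycle lifts to two disjoint cycles or to one cycle of doubled length, each with probability $1/2$, so the expected number of $(g-1)$-cycles in the cover is $M$ while the trivial cover has $2M$, forcing $M=0$. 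That argument is completely self-contained, needing only linearity of expectation and no asymptotics, whereas your route rests on the joint Poisson convergence of the short Berge-cycle, non-linearity and degeneracy counts in the hypergraph configuration model: your first-moment estimates are right ($\Theta(n^{\ell})$ witness sequences times $\Theta(n^{-\ell})$ per prescribed realization), but positive probability of the all-zero event genuinely requires the method-of-moments step, which you assert rather than carry out; since this is standard and is exactly what \cite{cooper} supplies, it is a citation-level reliance rather than a gap. In exchange, your approach yields more than bare existence: for all large $n$ with $r \mid nd$ it gives girth at least $g$ with probability bounded away from zero (indeed with an explicit limiting probability), and it needs no base construction or induction.
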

\begin{proof}
We prove this by induction on $g$, for fixed $r,d$. When $g=3$, all we need is a linear, $r$-uniform, $d$-regular hypergraph. Let $H$ be the hypergraph on vertex-set $\mathbb{Z}_r^d$, whose edges are all the axis-parallel lines, i.e. 
$$E(H) = \{\{\mathbf{x},\mathbf{x}+\mathbf{e}_i,\mathbf{x}+2\mathbf{e}_i,\ldots,\mathbf{x}+(r-1)\mathbf{e}_i\}:\ \mathbf{x} \in \mathbb{Z}_r^d,\ i \in [d]\}.$$
(Here, $\mathbf{e}_i$ denotes the $i$th standard basis vector in $\mathbb{Z}_r^d$, i.e. the vector with $1$ in the $i$th coordinate and zero elsewhere. As usual, $\mathbb{Z}_r$ denotes the ring of integers modulo $r$.) Clearly, $H$ is linear and $d$-regular. 

For $g \geq 4$ we do the induction step. We start from a finite, linear, $r$-uniform, $d$-regular hypergraph $H$ of girth at least $g-1$. Of all such hypergraphs we consider one with the least possible number of $(g-1)$-cycles. Let $M$ be the number of $(g-1)$-cycles in $H$. We shall prove that $M=0$. If $M>0$, we consider a random 2-lift $H'$ of $H$, defined as follows. Its vertex set is $V(H') = V(H) \times \{0,1\}$, and its edges are defined as follows. For each edge $e \in E(H)$, choose an arbitrary ordering $(v_1,\ldots,v_r)$ of the vertices in $e$, flip $r-1$ independent fair coins $c^{(1)}_e,\ldots,c_e^{(r-1)} \in \{0,1\}$, and include in $H'$ the two edges
$$\{(v_1,j),(v_2,j \oplus c^{(1)}_e),\ldots,(v_r,j\oplus c^{(r-1)}_e)\}\mbox{~~for~~}j=0,1.$$
(Here, $\oplus$ denotes modulo 2 addition.) Do this independently for each edge. Note that $H'$ is linear and $d$-regular, since $H$ is.

Let $\pi:V(H') \to V(H)$ be the cover map, defined by $\pi((v,j))=v$ for all $v \in V(H)$ and $j \in \{0,1\}$. Since any cycle in $H'$ is projected to a cycle in $H$ of the same length, $H'$ has girth at least $g-1$, and each $(g-1)$-cycle in $H'$ projects to a $(g-1)$-cycle in $H$. Let $C$ be a $(g-1)$-cycle in $H$. We claim that $\pi^{-1}(C)$ either consists of two vertex-disjoint $(g-1)$-cycles in $H'$, or a single $2(g-1)$-cycle in $H'$, and that the probability of each is $1/2$. To see this, let $(e_1,\ldots,e_{g-1})$ be any cyclic ordering of $C$; then $|e_i \cap e_{i+1}|=1$ for all $i$ (since $H$ is linear). Let $e_i \cap e_{i+1} = \{w_i\}$ for all $i \in [g-1]$. For each $i$, consider the two edges in $\pi^{-1}(e_i)$. Either one of the two edges contains $(w_{i-1},0)$ and $(w_{i},0)$ and the other contains $(w_{i-1},1)$ and $(w_{i},1)$, {\em or} one edge contains $(w_{i-1},0)$ and $(w_i,1)$ and the other edge contains $(w_{i-1},1)$ and $(w_i,0)$. Call these two events $S(e_i)$ and $D(e_i)$, for `same' and `different'. Observe that $S(e_i)$ and $D(e_i)$ each occur with probability 1/2, independently for each edge $e_i$ in the cycle. Notice that $\pi^{-1}(C)$ consists of two disjoint $(g-1)$-cycles if and only if $D(e_i)$ occurs an even number of times, and the probability of this is $1/2$, proving the claim.

It follows that the expected number of $(g-1)$-cycles in $H'$ is $M$. Note that the trivial lift $H_0$ of $H$, which has $c^{(k)}_e = 0$ for all $k$ and $e$, consists of two vertex-disjoint copies of $H$, and therefore has $2M$ $(g-1)$-cycles. It follows that there is at least one 2-lift of $H$ with fewer than $M$ $(g-1)$-cycles, contradicting the minimality of $M$. Therefore, $M=0$, so in fact, $H$ has girth at least $g$. This completes the proof of the induction step, proving the theorem.
\end{proof}

\begin{remark} Lemma \ref{lemma:lift} can also be proved by considering a random $r$-uniform, $d$-regular hypergraph on $n$ vertices, for $n$ large. In \cite{cooper}, Cooper, Frieze, Molloy and Reed analyse these using a generalisation of Bollob\'as' configuration model for $d$-regular graphs. It follows from Lemma 2 in \cite{cooper} that if $H$ is chosen uniformly at random from the set of all $r$-uniform, $d$-regular, $n$-vertex, linear hypergraphs (where $r |n$), then
\begin{equation}\label{eq:randomreg} \Pr[\textrm{girth}(H) \geq g] = (1+o(1)) \frac{\exp(-\sum_{l=1}^{g-1} \lambda_l)}{1-\exp(-(\lambda_1+\lambda_2))},\end{equation}
where
$$\lambda_i = \frac{(r-1)^i (d-1)^i}{2i} \quad (i \in \mathbb{N}),$$
so this event occurs with positive probability for sufficiently large $n$, giving an alternative proof of Lemma \ref{lemma:lift}. (We note that the argument of \cite{cooper} can easily be adapted to prove the same statement in the case where $r\, |\, dn$.) \end{remark}

By itself, the proof of Lemma \ref{lemma:lift} implies only that
$$n_r(g,d) \leq \underbrace{
  {{{2^{2\vphantom{h}}}^{2\vphantom{h}}}^{\iddots\vphantom{h}}}^{2^{r^{Cd}}\vphantom{h}}
}_{\text{$g-3\ 2$'s}},$$
where $C$ is an absolute constant --- i.e., tower-type dependence upon $g$. We now proceed to obtain an upper bound which is exponential in $g$.

Consider a $d$-regular graph with girth at least $g$, with the smallest possible number of vertices subject to these conditions. Erd\H{o}s and Sachs \cite{erdossachs} proved that the diameter of such a graph is at most $g$. But a $d$-regular graph with diameter $D$ has at most
$$1+d\sum_{i=0}^{D-1}(d-1)^i$$
vertices (since this is an upper bound on the number of vertices in a ball of radius $D$). This yielded the upper bound (\ref{eq:erdos-sachs}) on the number of vertices in a $d$-regular graph of girth at least $g$ and minimal order.

We need an analogue of the Erd\H{o}s-Sachs argument for hypergraphs.

\begin{lemma}
Let $r,d$ and $g$ be integers with $d \geq 2$ and $r,g \geq 3$. Let $H$ be an $r$-uniform, $d$-regular hypergraph with girth at least $g$, with the smallest possible number of vertices subject to these conditions. Then $H$ cannot contain $r$ vertices every two of which are at distance greater than $g$ from one another.
\end{lemma}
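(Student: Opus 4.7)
My plan is to argue by contradiction: assume $H$ contains $r$ vertices $u_1, \ldots, u_r$ that are pairwise at distance greater than $g$, and construct from $H$ a linear, $r$-uniform, $d$-regular hypergraph $H'$ of girth at least $g$ on $r$ fewer vertices, contradicting the minimality of $|V(H)|$. The surgery is a hypergraph analogue of the classical edge-swap used for graphs: I would delete every edge incident to any $u_i$, remove the now-isolated $u_i$'s, and rebuild using the freed-up incidences at the $u_i$-neighbours to form new $r$-edges that bridge between different $u_i$-neighbourhoods.

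Because $\textrm{dist}(u_i, u_j) > g \geq 3$ for $i \neq j$, the $rd$ edges incident to the $u_i$'s pairwise meet only at the $u_i$'s themselves (any other common vertex would produce a $u_i$-$u_j$ walk of length $2$). Writing the $d$ edges at $u_i$ as $e_{i,1}, \ldots, e_{i,d}$ and $e_{i,k} = \{u_i\} \cup \{v_{i,k,1}, \ldots, v_{i,k,r-1}\}$, the $rd(r-1)$ vertices $v_{i,k,\ell}$ are all distinct. I would then form $H'$ by removing the edges $e_{i,k}$ and the vertices $u_1,\ldots,u_r$, and adding, for each pair $(k,\ell) \in [d] \times [r-1]$, the new edge
\[ f_{k,\ell} = \{v_{1,k,\ell}, v_{2,k,\ell}, \ldots, v_{r,k,\ell}\}. \]
Each $v_{i,k,\ell}$ loses exactly one edge ($e_{i,k}$) and gains exactly one ($f_{k,\ell}$), so $H'$ is $r$-uniform and $d$-regular with $|V(H)|-r$ vertices; linearity is short to verify, since any two distinct $f_{k,\ell}$'s are disjoint by construction, and a pre-existing edge sharing two vertices of some $f_{k,\ell}$ would produce a three-edge walk between two distinct $u_i$'s.

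The main obstacle is verifying that $H'$ has girth at least $g$. I would suppose a Berge cycle $C=(E_1,\ldots,E_L)$ in $H'$ of length $L<g$ and track the number $t$ of new edges it uses. The case $t=0$ reduces immediately to the girth of $H$. For $t=1$, writing the single new edge as $f_{k,\ell}$ with entry vertex $v_{p,k,\ell}$ and exit vertex $v_{q,k,\ell}$ ($p \ne q$), replacing $f_{k,\ell}$ by the two bridge edges $e_{p,k}, e_{q,k}$ yields a walk in $H$ from $u_q$ to $u_p$ of length $L+1 \leq g$, contradicting $\textrm{dist}(u_p,u_q) > g$. For $t \geq 2$, the new edges partition $C$ into $t$ arcs of old edges with lengths $a_1,\ldots,a_t$, $\sum a_s = L-t$; padding each arc with its two bridge edges produces either a walk in $H$ between two distinct $u_i$'s (so $a_s \geq g-1$ by the distance hypothesis) or a closed walk at a single $u_i$ that contains a Berge cycle of $H$ (so $a_s \geq g-2$ by the girth hypothesis). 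In every case $a_s \geq g-2$, hence $L = t + \sum a_s \geq t(g-1) \geq 2(g-1) \geq g$, contradicting $L<g$. The delicate subpoint I expect to check carefully is the second subcase above: one must confirm that the closed walk extracted really contains a Berge cycle of the claimed length, by producing distinct connection vertices. This uses the disjointness of the $u_i$-neighbourhoods, the linearity of $H$ (so the two bridge edges at $u_{q_s}$ meet only there), and the distinctness of the connection vertices inherited from $C$.
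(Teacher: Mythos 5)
Your proposal is correct and follows essentially the same route as the paper: delete the $r$ far-apart vertices and their $rd$ incident edges, add $d(r-1)$ pairwise disjoint transversal edges (one vertex from each punctured neighbourhood), and rule out short cycles in the new hypergraph by a case analysis on the number of new edges used, invoking the distance hypothesis when the two relevant centres differ and the girth hypothesis when they coincide. The only difference is bookkeeping in the several-new-edges case (you sum lower bounds over all arcs, the paper extracts a single minimal sub-path between two new edges), which is not a substantive departure.
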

\begin{proof}
Let $H$ be an $r$-uniform, $d$-regular hypergraph with girth at least $g$. Suppose that $H$ contains $r$ distinct vertices $v_1,v_2,\ldots,v_r$ such that $\dist(v_i,v_j) > g$ for all $i \neq j$. We will show that it is then possible to construct an $r$-uniform, $d$-regular hypergraph with girth at least $g$, that has fewer vertices than $H$; this will prove the lemma.

Note that $H$ is linear, since $g \geq 3$. For each $i \in [r]$, let $e^{(1)}_i,e^{(2)}_i,\ldots,e^{(d)}_i$ be the edges of $H$ which contain $v_i$. Let
$$W_i = \bigcup_{k=1}^{d}(e^{(k)}_i \setminus \{v_i\})$$
for each $i \in [r]$. Notice that $|W_i| = d(r-1)$ for each $i$, since the edges $e^{(k)}_i\ (k \in [d])$ are disjoint apart from the vertex $v_i$. Moreover, $W_i \cap W_j = \emptyset$ for all $i \neq j$, since $d(v_i,v_j) > 2$.

Define a new hypergraph $H'$ by taking $H$, deleting $v_1,v_2,\ldots,v_r$ and all the edges containing them, and adding $d(r-1)$ pairwise disjoint edges, each of which contains exactly one vertex from $W_i$ for each $i \in [r]$. (Note that none of these `new' edges were in the original hypergraph $H$, otherwise some $v_{i}$ and $v_{j}$  would have been at distance at most 3 in $H$, a contradiction.) Clearly, $H'$ is $d$-regular. We claim that it is linear. Indeed, if one of the `new' edges shared two vertices with some edge $f \in H$ (say it shares $a \in W_i$ and $b \in W_j$, where $i \neq j$), then there would be a path of length 3 in $H$ from $v_i$ to $v_j$, a contradiction. 

We now claim that $H'$ has girth at least $g$. Suppose for a contradiction that $H'$ has girth at most $g-1$. Let $C$ be a cycle in $H'$ of length $l \leq g-1$. Since $H'$ is linear, we have $l \geq 3$. Let $(f_1,\ldots,f_l)$ be a cyclic ordering of $C$. We split into two cases.

\textit{Case 1.} Suppose that $C$ contains exactly one of the `new' edges (say $f_i$ is a `new' edge). Deleting $f_i$ from $C$ produces a path $P$ of length at most $g-2$ in $H$. We have $|f_{i-1} \cap f_i| = |f_i \cap f_{i+1}|=1$ (since $H'$ is linear); let $f_{i-1}\cap f_i = \{a\}$, and let $f_{i} \cap f_{i+1}=\{b\}$. Note that $a \neq b$. Suppose that $a \in W_p$ and $b \in W_q$. Since $a \neq b$ and $a,b \in f_i$, we must have $p \neq q$, as each `new' edge contains exactly one vertex from each $W_k$. Let $e$ be the edge of $H$ containing both $v_p$ and $a$, and let $e'$ be the edge of $H$ containing both $v_q$ and $b$; adding $e$ and $e'$ to the appropriate ends of the path $P$ produces a path in $H$ of length at most $g$ from $v_p$ to $v_q$, contradicting the assumption that $\textrm{dist}(v_p,v_q) > g$.

\textit{Case 2.} Suppose instead that $C$ contains more than one of the `new' edges. Choose a minimal sub-path $P$ of $C$ which connects two `new' edges. Suppose $P$ connects the new edges $f_i$ and $f_j$, so that $P = (f_i,f_{i+1},\ldots,f_{j-1},f_j)$. Note that $|i-j| \leq (g-1)/2$, so $P$ has length at most $(g+1)/2 \leq g-1$. Let $f_i \cap f_{i+1} = \{a\}$, and suppose $a \in W_p$; let $f_{j-1} \cap f_j = \{b\}$, and suppose $b \in W_q$. Let $e$ be the unique edge of $H$ which contains both $v_p$ and $a$, and let $e'$ be the unique edge of $H$ which contains both $v_q$ and $b$. If $p \neq q$, then we can produce a path in $H$ from $v_p$ to $v_q$ by taking $P$, and replacing $f_i$ with $e$ and $f_j$ with $e'$; this path has length at most $g-1$, contradicting our assumption that $d(v_p,v_q) > g$. If $p=q$, then we can produce a cycle in $H$ by taking $P$, removing $f_i$ and $f_j$, and adding the edges $e$ and $e'$ (which share the vertex $v_p$); this cycle has length at most $g-1$, contradicting our assumption that $H$ has girth at least $g$.

We may conclude that $H'$ has girth at least $g$, as claimed. Clearly, $H'$ has fewer vertices than $H$; this completes the proof.
\end{proof}

This lemma quickly implies an upper bound on the minimal number of vertices in an $r$-uniform, $d$-regular hypergraph of girth at least $g$.

\begin{theorem}
Let $r,d$ and $g$ be integers with $d \geq 2$ and $r,g \geq 3$. There exists an $r$-uniform, $d$-regular hypergraph with girth at least $g$, and at most
$$(r-1)\left(1+d(r-1)\frac{(d-1)^g (r-1)^g -1}{(d-1)(r-1)-1}\right) < 4((d-1)(r-1))^{g+1}$$
vertices. Hence,
$$n_r(g,d) < 4((d-1)(r-1))^{g+1}.$$
\end{theorem}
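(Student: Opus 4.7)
The plan is to combine Lemma~\ref{lemma:lift} with the preceding lemma via a standard packing argument. Lemma~\ref{lemma:lift} tells us that the family $\mathcal{F}$ of linear, $r$-uniform, $d$-regular hypergraphs with girth at least $g$ is nonempty, so I pick $H \in \mathcal{F}$ minimizing $|V(H)|$. By the preceding lemma, no $r$ vertices of $H$ can be pairwise at distance greater than $g$.

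I then invoke the ``maximal separated set'' trick. Choose $S \subseteq V(H)$ maximal (for inclusion) subject to the property that every two distinct vertices of $S$ are at distance strictly greater than $g$ in $H$. The preceding lemma gives $|S| \leq r-1$, and maximality forces every vertex of $H$ to lie within distance $g$ of some element of $S$. Therefore
\begin{equation*}
|V(H)| \;\leq\; \sum_{s \in S} |B(s,g)|.
\end{equation*}

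The third ingredient is the trivial branching upper bound on ball sizes in a linear, $r$-uniform, $d$-regular hypergraph: any ball of radius $R$ has at most $1 + d(r-1)\sum_{i=0}^{R-1}((d-1)(r-1))^i$ vertices. This is the same counting that yielded the right-hand side of (\ref{eq:odd}), but now used only as an upper bound, so the girth hypothesis is not needed --- any coincidence between branches only decreases the count. Setting $R = g$ and combining with $|S| \leq r-1$ yields
\begin{equation*}
|V(H)| \;\leq\; (r-1)\left[1 + d(r-1)\cdot\frac{((d-1)(r-1))^g - 1}{(d-1)(r-1) - 1}\right],
\end{equation*}
which is exactly the first bound claimed in the theorem.

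The clean form $< 4((d-1)(r-1))^{g+1}$ is then a direct arithmetic check. Writing $x := (d-1)(r-1) \geq 2$, the leading term of the above expression is $\tfrac{d(r-1)^{2}}{x-1}\,x^{g}$, and using $d \leq 2(d-1)$ one verifies $d(r-1)^{2} \leq 4x(x-1)$ for all admissible $d,r$. Plugging this in together with the $-1$ in the numerator of the geometric sum and the lower-order $(r-1)$ term gives strictness. I foresee no real obstacle: the serious content was borne by the preceding Erd\H{o}s--Sachs-style swapping lemma, and the present theorem is its immediate diameter-to-order consequence. The only mild subtlety is that the case $(d,r) = (2,3)$ is essentially tight, so one must keep the ``$-1$'' in the geometric sum rather than discard it, in order to obtain a strict inequality with the constant $4$.
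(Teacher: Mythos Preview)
Your proposal is correct and follows essentially the same route as the paper: take a vertex-minimal $H$, choose a maximal set $S$ of pairwise $g$-separated vertices (so $|S|\le r-1$ by the preceding lemma), cover $V(H)$ by the balls $B(s,g)$ with $s\in S$, and apply the branching upper bound on ball sizes. Your handling of the crude arithmetic inequality is also in the same spirit as the paper's, with the added care you note for the borderline case $(d,r)=(2,3)$.
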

\begin{proof}
Let $H$ be an $r$-uniform, $d$-regular hypergraph with girth at least $g$, with the smallest possible number of vertices subject to these conditions. Let $\{v_1,v_2,\ldots,v_k\}$ be a set of vertices of $H$ whose pairwise distances are all greater than $g$, with $k$ maximal subject to this condition. By the previous lemma, we have $k < r$. Any vertex of $H$ must have distance at most $g$ from one of the $v_i$'s. For each $i$, the number of vertices of $H$ of distance at most $g$ from $v_i$ is at most
$$1+d(r-1)\sum_{i=0}^{g-1} ((d-1)(r-1))^i = 1+d(r-1) \frac{(d-1)^g (r-1)^g -1}{(d-1)(r-1)-1},$$
and therefore the number of vertices of $H$ is at most
\begin{align*} & k\left(1+d(r-1)\frac{(d-1)^g (r-1)^g -1}{(d-1)(r-1)-1}\right) \\
& \leq (r-1)\left(1+d(r-1)\frac{(d-1)^g (r-1)^g -1}{(d-1)(r-1)-1}\right).\end{align*}
Crudely, we have
$$(r-1)\left(1+d(r-1)\frac{(d-1)^g (r-1)^g -1}{(d-1)(r-1)-1}\right) < 4((d-1)(r-1))^{g+1}$$
for all integers $r,d$ and $g$ with $d \geq 2$ and $r,g \geq 3$, proving the theorem.
\end{proof}
The following corollary is immediate.
\begin{corollary}
\label{corr:lowerbound}
Let $r,d$ and $n$ be positive integers with $d \geq 2$ and $r \geq 3$. There exists an $r$-uniform, $d$-regular hypergraph on at most $n$ vertices, with girth greater than
$$ \frac{\log n - \log 4}{\log(d-1)+\log(r-1)} - 1.$$
Hence,
$$g_{r,d}(n) > \frac{\log n - \log 4}{\log(d-1)+\log(r-1)} - 1.$$
\end{corollary}

Observe that the lower bound in Corollary \ref{corr:lowerbound} differs from the upper bound in Corollary \ref{corr:upperbound} by a factor of (approximately) 2.

For $r,d \geq 3$, we have not been able to improve upon the lower bound in Corollary \ref{corr:lowerbound} for large $n$. As mentioned in the Introduction, in the case of graphs, the bipartite Ramanujan graphs of Lubotzsky, Phillips and Sarnak \cite{lps}, Margulis \cite{margulis} and Morgenstern \cite{morgenstern} provide $d$-regular, $n$-vertex graphs of girth at least
$$(1-o(1))\frac{4}{3} \frac{\log n}{\log (d-1)},$$
for infinitely many $n$, whenever $d-1$ is a prime power. Recall that a finite, connected, $d$-regular graph is said to be {\em Ramanujan} if every eigenvalue $\lambda$ of its adjacency matrix is either `trivial' (i.e. $\lambda = \pm d$), or has $|\lambda| \leq 2\sqrt{d-1}$.

\begin{theorem}[Lubotzsky-Phillips-Sarnak, Margulis, Morgenstern]
\label{thm:ramanujan}
For any odd prime power $p$, there exist infinitely many (bipartite) $(p+1)$-regular Ramanujan graphs $X^{p,q}$. The graph $X^{p,q}$ is a Cayley graph on the group $\textrm{PGL}(2,q)$, so has order $q(q^2-1)$. Moreover, its girth satisfies
$$g(X^{p,q}) \geq \frac{4\log q}{\log p} - \frac{\log 4}{\log p}.$$
\end{theorem}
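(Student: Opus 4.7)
The plan is to follow the Lubotzky--Phillips--Sarnak construction via integer quaternions. Let $H(\mathbb{Z}) = \{a+bi+cj+dk : a,b,c,d \in \mathbb{Z}\}$ denote the Lipschitz quaternions, with reduced norm $N(\alpha) = a^2+b^2+c^2+d^2$ and conjugation $\overline{\alpha} = a-bi-cj-dk$. The first step fixes the odd prime $p$ and invokes Jacobi's four-square theorem: the equation $a^2+b^2+c^2+d^2 = p$ has exactly $8(p+1)$ integer solutions. Restricting to solutions with $a>0$ odd and $b,c,d$ even carves out a conjugation-symmetric set $S = \{\alpha_1,\overline{\alpha_1},\ldots,\alpha_{(p+1)/2},\overline{\alpha_{(p+1)/2}}\}$ of exactly $p+1$ quaternions of norm $p$.

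Second, I would embed $H(\mathbb{Z})/qH(\mathbb{Z})$ into $M_2(\mathbb{F}_q)$ by sending $i,j,k$ to a trio of trace-zero matrices satisfying $i^2=j^2=k^2=-I$ and $ij=k$ (using $\sqrt{-1}\in\mathbb{F}_q$, or a small twist when $-1$ is not a square). Composing with the quotient $\mathrm{GL}_2(\mathbb{F}_q) \to \mathrm{PGL}_2(\mathbb{F}_q)$ sends the elements of $S$ to $p+1$ distinct generators. Define $X^{p,q} := \mathrm{Cay}(\mathrm{PGL}_2(\mathbb{F}_q), S)$: a $(p+1)$-regular graph on $q(q^2-1)$ vertices, bipartite precisely when $p$ is a quadratic non-residue modulo $q$. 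Infinitely many such $q$ exist by Dirichlet's theorem applied to an arithmetic progression determined by quadratic reciprocity.

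The heart of the argument is the girth bound. A closed walk of length $k$ based at the identity corresponds to a word $\beta = \alpha_{i_1}\cdots\alpha_{i_k}$ whose image in $\mathrm{PGL}_2(\mathbb{F}_q)$ is trivial; writing $\beta = a+bi+cj+dk$, this means $q\mid b,c,d$. Since $N(\beta) = p^k$ we have $b^2+c^2+d^2\leq p^k$. The key number-theoretic fact is that the Lipschitz (better, Hurwitz) quaternions admit unique factorization at $p$: if the word is reduced, meaning no adjacent pair $\alpha_i\overline{\alpha_i}$ occurs, then $\beta$ is not a rational integer, so $b^2+c^2+d^2\geq 1$, and by divisibility actually $b^2+c^2+d^2\geq q^2$. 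Hence $q^2\leq p^k$, i.e.\ $k\geq 2\log_p q$. Non-backtracking closed walks in the Cayley graph correspond to reduced words, so every cycle has length at least $2\log_p q$; bipartiteness doubles this, and bookkeeping of the eight unit ambiguities $\pm 1,\pm i,\pm j,\pm k$ yields $g(X^{p,q})\geq 4\log_p q - \log_p 4$, which is exactly the claimed inequality.

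The main obstacle is the unique-factorization step: one must show that every primitive quaternion of norm $p^k$ factors, up to the finite unit group, uniquely as a product of $k$ norm-$p$ elements of $S$, modulo the relation $\alpha_i\overline{\alpha_i} = p$. This is proved by passing to the Hurwitz order $\mathcal{O} = \mathbb{Z}\langle i,j,k,\tfrac{1+i+j+k}{2}\rangle$, verifying that $\mathcal{O}$ is a maximal order admitting a noncommutative Euclidean algorithm, and analyzing the two-sided ideal generated by $p$. I should remark that the Ramanujan property of $X^{p,q}$ (non-trivial eigenvalues bounded by $2\sqrt{p}$) is a separate and much deeper matter, requiring Deligne's proof of the Ramanujan--Petersson conjecture for weight-two holomorphic cusp forms on $\mathrm{GL}_2$; however, for the girth inequality stated in the theorem, only the quaternion factorization lemma is required.
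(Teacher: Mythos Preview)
This theorem is not proved in the paper; it is quoted from the literature (references \cite{lps}, \cite{margulis}, \cite{morgenstern}) and then used as a black box for the explicit hypergraph constructions that follow. There is therefore no argument in the paper against which to compare your proposal.

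Your sketch is nonetheless a recognisable outline of the Lubotzky--Phillips--Sarnak proof, with two caveats worth flagging. First, ``bipartiteness doubles this, and bookkeeping of the eight unit ambiguities yields $g \geq 4\log_p q - \log_p 4$'' is not the actual mechanism. The sharpening from $2\log_p q$ to $4\log_p q - \log_p 4$ uses that $g$ is even (so $p^{g/2}\in\mathbb{Z}$) together with $p^g - a^2 = b^2+c^2+d^2 \equiv 0 \pmod{q^2}$: factoring $p^g - a^2 = (p^{g/2}-a)(p^{g/2}+a)$ and noting that $q$ cannot divide both factors (else $q\mid 2p^{g/2}$) forces $q^2 \mid p^{g/2}\pm a$ for one choice of sign, whence $2p^{g/2} \geq q^2$. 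The $\log_p 4$ term comes from this factor of $2$, not from the eight Lipschitz units. Second, the theorem as stated covers all odd prime \emph{powers} $p$, whereas the integer-quaternion construction you describe handles only $p$ prime; the prime-power case is Morgenstern's contribution and requires quaternion algebras over the rational function field $\mathbb{F}_p(t)$ in place of $H(\mathbb{Z})$.
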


It is in place to remark that recently, Marcus, Spielman and Srivastava \cite{mss} proved the existence of infinitely many $d$-regular Ramanujan graphs for {\em every} $d \geq 3$. They did this by proving a weakening of a conjecture of Bilu and Linial \cite{b-l} on 2-lifts of Ramanujan graphs, namely, that every $d$-regular Ramanujan graph has a 2-lift whose second-largest eigenvalue is at most $2\sqrt{d-1}$. Their proof uses a beautiful new technique for demonstrating the existence of combinatorial objects, which they call the `method of interlacing polynomials'. (Even more spectacularly, they use this method to prove the Kadison-Singer conjecture, in \cite{ks}.) Being non-constructive, however, their proof does not imply good bounds for the girth problem.

We are able to improve upon the lower bound in Corollary \ref{corr:lowerbound} when $r=3$ and $d=2$, using the following explicit construction, based upon the Ramanujan graphs of Theorem \ref{thm:ramanujan}. Let $G$ be an $n$-vertex, $3$-regular graph of girth $g$. Take any drawing of $G$ in the plane with straight-line edges, and for each edge $e \in E(G)$, let $m(e)$ be its midpoint. Let $H$ be the 3-uniform hypergraph with
\begin{align*}
V(H) &= \{m(e):\ e \in E(G)\},\\
E(H) &= \{\{m(e_1),m(e_2),m(e_3)\}:\\
&\quad\quad e_1,e_2,e_3 \textrm{ are incident to a common vertex of }G\}.\end{align*}

Then the hypergraph $H$ is $2$-regular, and also has girth $g$. Taking $G = X^{2,q}$ (the Ramanujan graph of Theorem \ref{thm:ramanujan}) yields a 3-uniform, 2-regular hypergraph $H$ with 
\begin{align*} g(H) & = g(X^{2,q})\\
& \geq \frac{4\log q}{\log 2} - 2\\
& \geq \frac{4}{3} \frac{\log n}{\log 2}-2\end{align*}
improving upon the bound in Corollary \ref{corr:lowerbound} by a factor of $\tfrac{4}{3} - o(1)$.

The following explicit construction, also based on the Ramanujan graphs of Theorem \ref{thm:ramanujan}, provides $r$-uniform, $d$-regular hypergraphs of girth approximately $2/3$ of  the bound in Corollary \ref{corr:lowerbound}, whenever $d$ is a multiple of $r$. (We thank an anonymous referee of an earlier version of this paper, for pointing out this construction.) 

Suppose $d=rs$ for some $s \in \mathbb{N}$. Let $G$ be a $2(r-1)s$-regular, $n$ by $n$ bipartite graph, with vertex-classes $X$ and $Y$, and girth $g$. Then the edge-set of $G$ may be partitioned into $(r-1)$-edge stars in such a way that each vertex of $G$ is in exactly $rs$ of the stars. (Indeed, by Hall's theorem, we may partition the edge-set of $G$ into $2(r-1)s$ perfect matchings. First, choose $r-1$ of these matchings, and group the edges of these matchings into $n$ $(r-1)$-edge stars with centres in $X$. Now choose $r-1$ of the remaining matchings, and group their edges into $n$ $(r-1)$-edge stars with centres in $Y$. Repeat this process $s$ times to produce the desired partition of $E(G)$ into stars.)

Let $H$ be the $r$-uniform hypergraph whose vertex-set is $X \cup Y$, and whose edge-set is the collection of vertex-sets of these stars; then $H$ is $(rs)$-regular, and has girth at least $g/2$.

If $2(r-1)s-1$ is a prime power, the bipartite Ramanujan graph $X^{p,q}$ (with $p = 2(r-1)s-1$) can be used to supply the graph $G$.
This yields a linear, $r$-uniform, $(rs)$-regular hypergraph with girth $g(H)$ satisfying
\begin{align*}g(H)& \geq \frac{1}{2}\left(\frac{4\log q}{\log (2rs-2s-1)} - \frac{\log 4}{\log (2rs-2s-1)}\right)\\
& \geq \frac{1}{2}\left(\frac{4}{3}\frac{\log n}{\log (2rs-2s-1)} - \frac{\log 4}{\log (2rs-2s-1)}\right)\\
& = \frac{2}{3}\frac{\log n}{\log (2d-2d/r-1)} - \frac{\log 2}{\log (2d-2d/r-1)},\end{align*}
where $d=rs$.

Unfortunately, this lower bound is asymptotically worse than that given by Corollary \ref{corr:lowerbound}, for all values of $r$ and $d$.

\section{Random `Cayley' hypergraphs}
 In this section, we give a construction of random `Cayley' hypergraphs on the symmetric group $S_n$, which have girth $\Omega(n^{1/3})$ with high probability. This is much higher than the girth of a random regular hypergraph on the same number of vertices (which, by (\ref{eq:randomreg}), has girth at most $C(\epsilon)$ with probability at least $1-\epsilon$ for any $\epsilon >0$, where $C(\epsilon)$ is a constant depending on $\epsilon$ alone), though it is still short of the optimal $\Theta(\log |V(H)|)$ in Corollary \ref{corr:lowerbound}. The situation is analogous to the graph case, where random $d$-regular Cayley graphs on appropriate groups have much higher girth than random $d$-regular graphs of the same order (due to the dependency between cycles at different vertices of a Cayley graph).
 
First, we need some more definitions. If $S$ is a set of symbols, a {\em word in} $S$ is a string of the form
$$s_1^{a_1}s_2^{a_2} \ldots s_l^{a_l}$$
where $s_1,\ldots,s_l \in S$ and $a_1,\ldots,a_l \in \mathbb{Z} \setminus \{0\}$. Such a word is said to be {\em cyclically irreducible} if $s_i \neq s_{i+1}$ for all $i \in [l]$, where we define $s_{l+1} := s_1$. Its {\em length} is $\sum_{i=1}^{l}|a_i|$.

\begin{theorem}
\label{thm:sym}
Let $r$ and $n$ be positive integers with $r \geq 3$ and $r|n$. Let $X(n,r)$ be the set of permutations in $S_n$ that consist of $\frac nr$ disjoint $r$-cycles. Choose $d$ permutations $\tau_1,\tau_2,\ldots,\tau_d$ uniformly at random and independently (with replacement) from $X(n,r)$, and let $H$ be the random hypergraph with vertex-set $S_n$ and edge-set
$$\{\{\sigma,\sigma \tau_i, \sigma \tau_i^2,\ldots,\sigma \tau_i^{r-1}\}:\ \sigma \in S_n,\ i \in [d]\}.$$
Then with high probability, $H$ is a linear, $r$-uniform, $d$-regular hypergraph with girth at least
$$c_0 \left(\frac{n}{r(\log(d-1)+\log(r-1))}\right)^{1/3},$$
for some constant $c_0>0$.
\end{theorem}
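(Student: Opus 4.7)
The approach is the first moment method, combined with the vertex-transitivity of the Cayley construction. For the structural claims, $r$-uniformity is immediate from each $\tau_i \in X(n,r)$ having order exactly $r$. By translation-invariance, both the bad event for linearity (two distinct edges sharing two vertices) and the bad event for $d$-regularity (two distinct generators producing the same edge at some vertex) reduce to a coincidence of the form $\tau_i^a = \tau_j^b$ for some $i \neq j$ and $a,b \in \{1,\ldots,r-1\}$. Conditioning on $\tau_j$ and observing that $\tau \mapsto \tau^a$ is at most $r$-to-one on $X(n,r)$, each such coincidence has probability at most $r/|X(n,r)|$, and a union bound over $O(d^2 r^2)$ configurations gives both properties with probability $1-o(1)$.

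To bound the girth, I parameterize Berge cycles. A Berge cycle of length $l$ is determined (modulo the $2l$-fold rotation/reflection symmetry) by a pivot vertex $v_1 \in S_n$, a cyclically reduced sequence $(\alpha_1,\ldots,\alpha_l) \in [d]^l$ (with $\alpha_i \neq \alpha_{i+1}$), and exponents $(j_1,\ldots,j_l) \in \{1,\ldots,r-1\}^l$ subject to the closing condition
\[
w(\tau) := \tau_{\alpha_2}^{j_1} \tau_{\alpha_3}^{j_2} \cdots \tau_{\alpha_l}^{j_{l-1}} \tau_{\alpha_1}^{j_l} = e \quad \text{in } S_n,
\]
together with distinctness of the induced pivots $v_i = v_1 \tau_{\alpha_2}^{j_1} \cdots \tau_{\alpha_i}^{j_{i-1}}$. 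Letting $N_l$ denote the number of Berge cycles of length $l$, this gives
\[
\mathbb{E}[N_l] \;\leq\; \frac{n! \cdot d(d-1)^{l-1}(r-1)^l}{2l} \cdot P_l, \qquad P_l := \max_w \Pr[w(\tau)=e],
\]
the maximum being over reduced words $w$ of the above form.

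The main step is the bound on $P_l$. I plan to estimate $\Pr[w(\tau)=e]$ by a lazy revelation of the $\tau_\alpha$'s, viewing each $\tau_\alpha \in X(n,r)$ as a uniformly random partition of $[n]$ into cyclic $r$-tuples and exposing values $\tau_\alpha(y)$ only as they are needed to compute $w$ acting on each $x \in [n]$. The event $w(\tau)=e$ is the simultaneous closing of $n$ walks on $[n]$ (one per starting point), each of cumulative elementary length at most $lr$; provided $lr$ is small compared to $\sqrt{n}$, these walks stay essentially self-avoiding and each fresh revelation is close to uniform on the as-yet-unrevealed entries, yielding a bound of roughly $P_l \leq (1/n!)\exp(O(lr\log n))$. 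Substituting into the formula for $\mathbb{E}[N_l]$ and summing over $3 \leq l \leq L$, the choice $c_0 < 1/2$ is what makes the combined exponent $l\log((d-1)(r-1)) + O(lr \log n) - \log n!$ negative and bounded away from zero throughout that range, so that $\sum_l \mathbb{E}[N_l] = o(1)$; Markov's inequality then gives no Berge cycle of length $\leq L$ with probability $1-o(1)$, which combined with the first paragraph proves the theorem.

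The main obstacle is producing a bound on $P_l$ that is sharp enough to cancel the $n!$ in the pivot count, uniformly over all reduced words $w$ of length $l$. The $\sqrt{n \log n}$ threshold in the statement is exactly the birthday-paradox limit of this approach: once the cumulative walk length $lr$ exceeds roughly $\sqrt{n}$, the lazily-revealed $\tau_\alpha$-values start recurring across the $n$ closing walks, the near-uniformity heuristic for each new revelation collapses, and the method no longer gives any non-trivial control on $P_l$. Closing the gap to the authors' conjectured $\Omega(\log|S_n|)$ would apparently require an essentially different technique, perhaps exploiting character-theoretic cancellations on $S_n$ or some form of expander mixing.
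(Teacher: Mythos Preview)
Your structural argument (linearity and $d$-regularity via the coincidence $\tau_i^a=\tau_j^b$) matches the paper's. The girth argument, however, has a genuine gap: the claimed bound $P_l \le (1/n!)\exp(O(lr\log n))$ is false, and the lazy-revelation scheme cannot produce it. For a concrete counterexample take $l=2$ and the word $w=\tau_1\tau_2^{r-1}$; then $w(\tau)=e$ iff $\tau_1=\tau_2$, so $P_2 \ge 1/|X(n,r)| = (n/r)!\,r^{n/r}/n! = \exp\bigl(-(1-1/r)\,n\log n + O(n)\bigr)$, which exceeds $(1/n!)\exp(O(r\log n))$ by a factor of roughly $(n!)^{1/r}$. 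More structurally, the revelation argument can only certify that $w$ fixes about $n/(Lr)$ carefully chosen points (with $L\le (r-1)l$) before every $r$-cycle of every $\tau_\alpha$ used by $w$ has been exposed; after that, the remaining walks are deterministic and contribute no further probability. So the method yields at best $P_l \le \bigl(O(Lr/n)\bigr)^{\,n/(2Lr)}$, which is exactly what the paper proves, and this is far too weak to absorb the $n!$ in your expression for $\mathbb{E}[N_l]$.

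The repair is the piece of vertex-transitivity you invoked in the first paragraph but then dropped: for every fixed choice of $\tau_1,\dots,\tau_d$, left multiplication by any $\rho\in S_n$ is an automorphism of $H$, so $H$ contains an $l$-cycle iff it contains one through the identity element. Hence the pivot vertex $v_1$ should not be summed over at all; one simply union-bounds over the at most $(d-1)^l(r-1)^l$ cyclically reduced word-types and uses the (correct) bound $P_l \le (2r(r-1)l/n)^{\,n/(2r(r-1)l)}$. Summing over $l<g$ then forces exactly the threshold $c_0<1/2$ in the statement. This is precisely the paper's route. (Equivalently: in your first-moment framework, $N_l>0$ implies $N_l \ge n!/(2l)$ by transitivity, so Markov applied to $N_l$ loses a factor of $n!$ that you can recover by instead applying Markov to the \emph{word count} $2l\,N_l/n!$.)
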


\begin{remark}
Here, `with high probability' means `with probability tending to 1 as $n \to \infty$'.
\end{remark}
\begin{remark}
(Added 12th July 2017.) We note that in the previous version of this paper, the claimed lower bound on the girth of $H$ in Theorem \ref{thm:sym} was somewhat stronger, viz., $\Omega(\sqrt{n \log n})$. However, our previous proof used a variant of a method in the proof of \cite[Theorem 3]{ghssv}; both that proof and our variant thereof contained a hole, as pointed out by Eberhard in \cite{eberhard}. Here, we repair the hole, albeit giving a slightly weaker bound of $\Omega(n^{1/3})$. The fix uses a very similar method to that of Eberhard in \cite{eberhard}, where a slightly weaker version of \cite[Theorem 3]{ghssv} is proved. More details are given below.
\end{remark}

\begin{proof}
Note that the edges of the form
$$\{\sigma,\sigma \tau_i, \sigma \tau_i^2,\ldots,\sigma \tau_i^{r-1}\}\ (\sigma \in S_n)$$
are simply the left cosets of the cyclic group $\{\textrm{Id},\tau_i,\tau_i^2,\ldots,\tau_i^{r-1}\}$ in $S_n$, so they form a partition of $S_n$. We need two straightforward claims.
\begin{claim}
\label{claim:condition}
With high probability, the following condition holds.
\begin{equation}\label{eq:cond} \tau_1,\ldots,\tau_d \textrm{ satisfy}\quad \tau_i^{k} \neq \tau_j^{l}\quad \textrm{for all distinct }i,j \in [d] \textrm{ and all }k,l \in [r-1].\end{equation}
\end{claim}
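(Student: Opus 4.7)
The plan is to establish (\ref{eq:cond}) by a union bound over the $d(d-1)(r-1)^2$ tuples $(i,j,k,l)$ with $i \neq j \in [d]$ and $k,l \in [r-1]$, and to obtain a uniform super-polynomially small bound on $\Pr[\tau_i^k = \tau_j^l]$. Condition on $\tau_i$; since $\tau_j$ is independent of $\tau_i$ and uniform on $X(n,r)$, it suffices to show that for any fixed $\pi \in S_n$ and any $l \in [r-1]$, $\Pr[\tau_j^l = \pi]$ is very small.

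The key observation is that the map $\phi \colon \sigma \mapsto \sigma^l$ on $S_n$ commutes with conjugation: $(g\sigma g^{-1})^l = g \sigma^l g^{-1}$. Since $X(n,r)$ is a single conjugacy class, $\phi(X(n,r))$ is also a single conjugacy class $C_l$, namely that of $\sigma^l$ for any $\sigma \in X(n,r)$. Because the distribution of $\tau_j$ is conjugation-invariant and $\phi$ commutes with conjugation, the push-forward distribution of $\tau_j^l$ is conjugation-invariant as well; combined with the transitivity of conjugation on $C_l$, this forces $\tau_j^l$ to be uniformly distributed on $C_l$. Hence
\[
\Pr[\tau_j^l = \pi] \leq \frac{1}{|C_l|}.
\]

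It remains to lower-bound $|C_l|$. Setting $g = \gcd(l,r)$, elements of $C_l$ consist of $ng/r$ disjoint cycles of length $r/g$, so
\[
|C_l| \;=\; \frac{n!}{(ng/r)!\,(r/g)^{ng/r}}.
\]
Since $l \in [r-1]$ forces $g \mid r$ with $g < r$, in particular $g \leq r/2$, and Stirling's formula yields
\[
\log |C_l| \;=\; (1 - g/r)\,n \log n \,+\, O(n) \;\geq\; \tfrac{1}{2}\, n \log n \,-\, O(n).
\]
Applying the union bound gives
\[
\Pr\bigl[(\ref{eq:cond})\text{ fails}\bigr] \;\leq\; d(d-1)(r-1)^2 \exp\!\bigl(-\tfrac{1}{2} n \log n + O(n)\bigr),
\]
which tends to $0$ as $n \to \infty$ for any $d = \exp(o(n \log n))$ --- certainly in every regime in which the girth bound of Theorem \ref{thm:sym} is nontrivial.

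The only non-routine point is the equivariance/symmetry argument giving that $\tau_j^l$ is uniform on a single conjugacy class $C_l$; this is where one must check that $\phi$ maps $X(n,r)$ onto (rather than merely into) $C_l$, which follows because $X(n,r)$ is one conjugacy class and $\phi$ is equivariant. As an alternative route, one may instead bound $|\{\tau \in X(n,r): \tau^l = \pi\}|$ directly by counting the ways of interleaving the $ng/r$ cycles of $\pi$ into $n/r$ cycles of length $r$, which yields the same order of estimate.
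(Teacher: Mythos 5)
Your proposal is correct and follows essentially the same route as the paper: condition on $\tau_i$, observe that $\tau_j^l$ is uniformly distributed on a conjugacy class of permutations all of whose cycles have length at least $2$, bound that class size from below by roughly $n!/((n/2)!\,2^{n/2})$, and finish with a union bound over the $O(d^2 r^2)$ choices of $(i,j,k,l)$. The only difference is that you spell out the conjugation-equivariance argument for the uniformity of $\tau_j^l$, which the paper simply asserts.
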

\begin{proof}[Proof of claim:]
Let us fix $i,j \in [d]$ with $i <j$, and fix $k,l \in [r-1]$. We shall bound the probability that $\tau_j^{l} = \tau_i^{k}$. We regard $\tau_i$ as fixed, and allow $\tau_j$ to vary. Since $\tau_i$ is a product of $n/r$ disjoint $r$-cycles, $\tau_i^k$ is a product of $n/s$ disjoint $s$-cycles, for some integer $s \ge 2$ that is a divisor of $r$. The set $X(n,s)$ of permutations which consist of $n/s$ disjoint $s$-cycles has cardinality
$$\frac{n!}{(n/s)! s^{n/s}} \geq \frac{n!}{(n/2)!2^{n/2}}$$
(provided $n \geq 4$). Notice that $\tau_j^{l}$ is uniformly distributed over $X(n,s')$, for some $s'$ that depends only on $r$ and $l$. Therefore,
$$\Pr[\tau_i^k = \tau_j^l] \leq \frac{(n/2)!2^{n/2}}{n!}.$$
By the union bound,
\begin{align*}& \Pr[\tau_i^k = \tau_j^l \textrm{ for some }i \neq j \textrm{ and some }k,l \in [r-1]]\\
 & \leq (r-1)^2{d \choose 2} \frac{(n/2)!2^{n/2}}{n!}\\
& \to 0 \quad \textrm{as }n \to \infty,\end{align*}
proving the claim.
\end{proof}

\begin{claim}
\label{claim:linear}
If condition (\ref{eq:cond}) holds, then for all $i \neq j$ and all $\sigma, \pi \in S_n$, the two cosets
$$\{\sigma, \sigma \tau_i, \sigma \tau_i^2,\ldots, \sigma \tau_i^{r-1}\} \mbox{~~and~~} \{\pi, \pi \tau_j, \pi \tau_j^2, \ldots, \pi \tau_j^{r-1}\}$$
have at most one element in common.
\end{claim}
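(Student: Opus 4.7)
The plan is to argue by contradiction: assume that the two cosets $\sigma\langle\tau_i\rangle$ and $\pi\langle\tau_j\rangle$ share at least two distinct elements, and extract from this an identity of the form $\tau_i^k=\tau_j^l$ with $k,l\in[r-1]$, directly contradicting condition~(\ref{eq:cond}).

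First I would note the basic structural fact that, since $\tau_i$ is a product of disjoint $r$-cycles, $\tau_i$ has order exactly $r$ in $S_n$; hence the set $\{\sigma,\sigma\tau_i,\ldots,\sigma\tau_i^{r-1}\}$ really is the left coset $\sigma\langle\tau_i\rangle$, and the elements listed are all distinct. The analogous statement holds for $\tau_j$. This means the question of how many elements the two sets share is equivalent to asking about the cardinality of $\sigma\langle\tau_i\rangle\cap\pi\langle\tau_j\rangle$.

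Next, suppose two common elements exist. Then there are exponents $a,b\in\{0,1,\ldots,r-1\}$ with $a\neq b$, and $c,d\in\{0,1,\ldots,r-1\}$ (forced to satisfy $c\neq d$, since otherwise we would conclude $a=b$) such that
\[\sigma\tau_i^{a}=\pi\tau_j^{c}\quad\text{and}\quad\sigma\tau_i^{b}=\pi\tau_j^{d}.\]
Solving both equations for $\pi^{-1}\sigma$ and equating yields
\[\tau_j^{c-d}=\tau_i^{a-b}.\]
Since $\tau_i$ and $\tau_j$ have order $r$, we may reduce the exponents modulo $r$; because $a\neq b$ and $c\neq d$ with both pairs lying in $\{0,\ldots,r-1\}$, the reduced exponents $a-b\bmod r$ and $c-d\bmod r$ both lie in $[r-1]=\{1,2,\ldots,r-1\}$. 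Thus we have produced $k,l\in[r-1]$ with $\tau_i^{k}=\tau_j^{l}$, which is precisely what condition~(\ref{eq:cond}) rules out.

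There is no serious obstacle here; the only things to be a little careful about are (i) recording that $\tau_i$ really does have order exactly $r$ (so that exponents can be taken mod $r$ and the reduced exponents remain in $[r-1]$, not in $\{0,\ldots,r-1\}$), and (ii) checking that $c\neq d$ is forced by $a\neq b$, so both reductions are nonzero and condition~(\ref{eq:cond}) applies. Everything else is a routine coset manipulation.
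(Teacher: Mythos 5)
Your proof is correct and follows essentially the same route as the paper's: take two common elements, write them as $\sigma\tau_i^a=\pi\tau_j^c$ and $\sigma\tau_i^b=\pi\tau_j^d$, and deduce $\tau_i^{a-b}=\tau_j^{c-d}$ with both exponents reducible into $[r-1]$, contradicting condition~(\ref{eq:cond}). Your extra care in noting that $\tau_i$ has order exactly $r$ and that $c\neq d$ is forced is a welcome (if minor) tightening of details the paper leaves implicit.
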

\begin{proof}[Proof of claim:]
Suppose for a contradiction that there are two distinct vertices $v_1,v_2$ with
$$v_1,v_2 \in \{\sigma, \sigma \tau_i, \sigma \tau_i^2,\ldots, \sigma \tau_i^{r-1}\} \cap \{\pi, \pi \tau_j, \pi \tau_j^2, \ldots, \pi \tau_j^{r-1}\}.$$
Then $v_1 = \sigma \tau_i^l = \pi \tau_j^{m}$ and $v_2 = \sigma \tau_i^{l'} = \pi \tau_j^{m'}$, where $l,m,l',m' \in \{0,1,\ldots,r-1\}$ with $l' \neq l$ and $m' \neq m$. Therefore,
$$v_1^{-1}v_2 = \tau_i^{l'-l} = \tau_j^{m'-m},$$
contradicting condition (\ref{eq:cond}).
\end{proof}

Claim \ref{claim:linear} implies that $H$ is a linear hypergraph, provided condition (\ref{eq:cond}) is satisfied. Moreover, $H$ is $d$-regular: every $\sigma \in S_n$ is contained in the edges (cosets)
$$(\{\sigma, \sigma \tau_i,\sigma \tau_i^2,\ldots,\sigma \tau_i^{r-1}\}: i \in [d]),$$
and these $d$ edges are distinct provided condition (\ref{eq:cond}) is satisfied.

Finally, we make the following.
\begin{claim}
\label{claim:girth}
With high probability, $H$ has girth at least
$$c_0 \left(\frac{n}{r(\log(d-1)+\log(r-1))}\right)^{1/3},$$
where $c_0>0$ is an absolute constant.
\end{claim}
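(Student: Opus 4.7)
The plan is to apply the first moment method to cycle patterns. Set $D:=(d-1)(r-1)$ and $g_0 := c_0\sqrt{n\log n/(r(r-1)\log D)}$; the goal is to show that with probability $1-o(1)$, $H$ contains no Berge cycle of length at most $g_0$. The starting observation is that a Berge $g$-cycle in $H$ corresponds, up to a symmetry factor of $2g$ (choice of starting vertex and direction), to a tuple $(v_0, i_1, c_1, \ldots, i_g, c_g)$ with $v_0\in S_n$, indices $i_k \in [d]$ satisfying $i_k \neq i_{k+1}$ cyclically (consecutive hyperedges sharing a vertex must come from distinct cyclic subgroups), and exponents $c_k \in \{1,\ldots,r-1\}$, such that the partial products $v_k := v_0 \tau_{i_1}^{c_1}\cdots\tau_{i_k}^{c_k}$ are pairwise distinct for $0\leq k\leq g-1$ and $\tau_{i_1}^{c_1}\cdots\tau_{i_g}^{c_g} = \mathrm{Id}$. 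Since the word-identity condition does not depend on $v_0$, existence of such a cycle is equivalent to existence of a pattern satisfying word$=\mathrm{Id}$, and I bound the latter by a union over patterns, of which there are at most $d(d-1)^{g-1}(r-1)^g\leq D^g\cdot d/(d-1)$ at each length $g$.

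The main technical obstacle is to establish, for every pattern $(i,c)$ of length $g$,
$$\Pr[\tau_{i_1}^{c_1}\cdots\tau_{i_g}^{c_g}=\mathrm{Id}] \leq \exp(-\Omega(n\log n)+O(rg^2\log D)).$$
I would prove this by a deferred-decisions exposure of the random permutations $\tau_1,\ldots,\tau_d$. For each $x\in[n]$, trace the walk produced by acting $w:=\tau_{i_1}^{c_1}\cdots\tau_{i_g}^{c_g}$ on $x$, revealing the $r$-orbit of the current walk position under the appropriate $\tau_j$ only when that orbit has not yet been revealed. Conditional on previously revealed orbits of $\tau_j$, a fresh orbit is uniform over $r$-subsets of the remaining elements of $[n]$ endowed with a uniform cyclic order, by the conjugation symmetry of the uniform distribution on $X(n,r)$ already exploited in Claim \ref{claim:condition}. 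Each fresh step of the walk therefore lands at an essentially uniform unused element, and the event $w=\mathrm{Id}$ forces closure at $x$ for \emph{every} $x\in[n]$; taking the product of closure probabilities across the $\Theta(n/r)$ disjoint initial orbits of the walks yields the stated $-\Omega(n\log n)$ factor, while the $O(rg^2\log D)$ correction comes from pairs of walks (started at different $x$'s) that share a revealed orbit of some $\tau_j$.

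Combining these two bounds,
$$\Pr[\mathrm{girth}(H)\leq g_0] \leq \sum_{g=3}^{g_0} D^g \cdot \exp(-\Omega(n\log n)+O(rg^2\log D)).$$
For $g\leq g_0$ with $c_0<1/2$, the correction term satisfies $rg_0^2\log D\leq c_0^2 n\log n/(r-1)$, strictly less than the $\Omega(n\log n)$ saving, so the sum equals $\exp(-\Omega(n\log n))=o(1)$. Combined with Claims \ref{claim:condition} and \ref{claim:linear}, which handle the linearity, $d$-regularity and $r$-uniformity of $H$ with high probability, this yields the theorem via Markov's inequality. The hardest part of the argument is the probability bound above: patterns in which a single index $j\in[d]$ appears at several positions couple the orbit exposures of $\tau_j$ across the corresponding steps, so one must argue carefully that the effective randomness per fresh step remains $\Theta(\log n)$ bits even in the presence of these correlations, and the assumption $c_0<1/2$ leaves exactly the slack needed to absorb the resulting corrections.
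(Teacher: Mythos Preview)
Your overall architecture is exactly the paper's: translate a length-$g$ Berge cycle into an irreducible word $\tau_{j_2}^{m_1}\cdots\tau_{j_1}^{m_l}=\mathrm{Id}$, bound $\Pr[W=\mathrm{Id}]$ by a deferred-decisions exposure of the $r$-cycles of the $\tau_j$'s, and take a union bound over the $\leq D^g$ patterns.

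The gap is in your probability estimate. You claim $\Pr[W=\mathrm{Id}]\leq\exp(-\Omega(n\log n)+O(rg^2\log D))$, obtained by running $\Theta(n/r)$ walks (one per $\tau_{i_1}$-orbit) and treating the correlations among them as a lower-order correction. This is not what the exposure argument delivers. Each walk from a fresh starting point reveals up to $L\leq (r-1)g$ $r$-cycles, and once $mLr$ is comparable to $n$ there need not be any starting point whose walk uses only fresh cycles; for such a point the conditional closure probability can be $1$, not $O(r/n)$. Concretely, already at the \emph{second} syllable the $\Theta(n/r)$ walk-positions $\tau_{i_1}^{c_1}(x)$ are distributed over only $n/r$ orbits of $\tau_{i_2}$, so $\Theta(n/r)$ collisions occur immediately --- the dependence is not a small correction of size $O(rg^2\log D)$ but rather first-order. (Your numerology also does not close: with a main term $-(n/r)\log n$ and correction $c_0^2 n\log n/(r-1)$, the threshold would be $c_0<\sqrt{(r-1)/r}$, not $1/2$.)

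What the exposure argument actually gives, and what the paper uses, is the bound obtained by testing only $m=\lceil n/(2Lr)\rceil$ starting points, each chosen outside all previously revealed cycles; this yields
\[
\Pr[W=\mathrm{Id}]\leq\Bigl(\tfrac{2Lr}{n}\Bigr)^{n/(2Lr)}\leq\Bigl(\tfrac{2r(r-1)g}{n}\Bigr)^{n/(2r(r-1)g)}.
\]
Combining with the $D^g$ pattern count and summing over $g<g_0$ gives the condition $c_0<1/2$ exactly. So your plan is right, but you should replace the claimed $\exp(-\Omega(n\log n))$ bound by this weaker, $g$-dependent one; it is both what the method proves and what is needed.
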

\begin{proof}[Proof of claim:]
We may assume that condition (\ref{eq:cond}) holds, so that $H$ is a linear, $d$-regular hypergraph. Let $C$ be a cycle in $H$ of minimum length, and let $(e_1,\ldots,e_l)$ be any cyclic ordering of its edges. Then we have $|e_i \cap e_{i+1}|=1$ for all $i \in [l]$ (where we define $e_{l+1} := e_1$), and by minimality, we have $e_i \cap e_j = \emptyset$ whenever $|i-j| > 1$. Let $e_i \cap e_{i+1} = \{w_i\}$ for each $i \in [l]$. Suppose that $e_i$ is an edge of the form
$$\{\sigma, \sigma \tau_{j_i}, \sigma \tau_{j_i}^2,\ldots, \sigma \tau_{j_i}^{r-1}\}$$
for each $i \in [l]$. Since $e_i \cap e_{i+1} \neq \emptyset$ for each $i \in [l]$, we must have $j_i \neq j_{i+1}$ for all $i \in [l]$ (where we define $j_{l+1}:=j_1$). For each $i \in [l]$, we have $w_i,w_{i+1} \in e_{i+1}$, so $w_{i+1}^{-1}w_{i} = \tau_{j_{i+1}}^{m_i}$ for some $m_i \in [r-1]$. Therefore,
\begin{equation}\label{eq:irreducibleword}\textrm{Id} = (w_l^{-1}w_{l-1}) \ldots (w_{3}^{-1}w_2)(w_{2}^{-1}w_1) (w_1^{-1}w_l)= \tau_{j_{l}}^{m_{l-1}} \ldots \tau_{j_3}^{m_{2}} \tau_{j_{2}}^{m_1} \tau_{j_1}^{m_{l}}.\end{equation}
Since $j_i \neq j_{i+1}$ for all $i \in [l]$, the word on the right-hand side of (\ref{eq:irreducibleword}) is cyclically irreducible, and evaluates to the identity permutation. We must show that the probability of this tends to zero as $n \to \infty$, for an appropriate choice of $l$. We use an argument very similar to (but slightly more involved than) that in \cite{eberhard}, where it is proved that with high probability, the Cayley graph on $S_n$ generated by $d$ random permutations has girth at least $\Omega((n/\log(2d-1))^{1/3})$. We remark that in a previous version of this paper, we used a variant of a proof in \cite{ghssv} of a stronger (claimed) bound, but the latter proof, and our variant thereof, both contain a hole, as pointed out in \cite{eberhard}.

For brevity, we write $m_{i} = a_{i+1}$ for each $i \in [l]$, and we write
$$W := \tau_{j_{l}}^{a_{l}} \ldots \tau_{j_3}^{a_{3}} \tau_{j_{2}}^{a_2} \tau_{j_1}^{a_{1}}$$
for the word on the right-hand side of (\ref{eq:irreducibleword}). For the convenience of the reader, we follow quite closely the structure of the argument in \cite{eberhard}.

We pick $x_1 \in [n]$ arbitrarily, and consider the `trajectory' of $x_1$ under $W$. Formally, we define
\begin{align*}
x_1^0 &= x_1,\\
x_1^i &= \tau_{j_i}^{a_i}(x_1^{i-1}) \quad (1 \leq j \leq l).
\end{align*}

Now suppose that at each `step' $i \in [l]$, when we are about to evaluate $x_1^i$, we examine (or `visit') the $r$-cycle of $\tau_{j_i}$ containing $x_1^{i-1}$, revealing this entire $r$-cycle if it has not been revealed already, but revealing no other information.

We will now show that during this process, with rather high probability, we reveal a new $r$-cycle at every step, i.e.\ we never `revisit' an $r$-cycle that has been `visited' already. 

For each $i \in [l-1]$, we say that $x_1^i$ is {\em good} if the $r$-cycle of $\tau_{j_{i+1}}$ containing $x_1^i$ has not been revealed at any prior step, i.e.\ for each $i' < i$ with $\tau_{j_{i'+1}} = \tau_{j_{i+1}}$, we have $x_1^i$ and $x_1^{i'}$ in different $r$-cycles of $\tau_{j_{i+1}}$. Otherwise, we say that $x_1^i$ is {\em bad}. Note that if $x_1^i$ is bad, and $i' < i$ is such that $\tau_{j_{i'+1}} = \tau_{j_{i+1}}$, then necessarily $i' \leq i-2$, since the word $W$ is cyclically irreducible, and we are assuming that condition (\ref{eq:cond}) holds, so $\tau_{j_i} \neq \tau_{j_{i+1}}$ for all $i$. Note also that the event $\{x_1^i \text{ is bad}\}$ depends only upon the $r$-cycles of $\tau_{j_1},\tau_{j_2},\ldots,\tau_{j_i}$ examined in steps $1,2,\ldots,i$, respectively.

Observe that for each $i \in [l-1]$, we have
\begin{equation}\label{eq:cond-1} \Pr[x_1^i \text{ is bad} \mid x_1^{1},\ldots,x_1^{i-1} \text{ are good}] \leq \frac{(i-1)r}{n-(i-1)r}.\end{equation}
Indeed, condition on the event that $x_1^{1},\ldots,x_1^{i-1}$ are good. Note that $x_1^i$ is in the $r$-cycle of $\tau_{j_i}$ revealed for the first time at step $i$ (here, we use the fact that $x_1^{i-1}$ is good); indeed, $x_1^i$ is the $a_i$th number in this $r$-cycle, after $x_1^{i-1}$. If $x_1^i$ is bad, then $x_1^i$ is also in an $r$-cycle of $\tau_{j_{i+1}}$ revealed (for the first time) at step $i'$, for some $i' <i$ such that $\tau_{j_{i'}} = \tau_{j_{i+1}}$. Since the $r$-cycles revealed at steps $i' < i$ together contain at most $(i-1)r$ numbers, and the $a_i$th number after $x_1^{i-1}$ in the $r$-cycle of $\tau_{j_i}$ revealed (for the first time) at step $i$, is chosen uniformly from a set of at least $n-(i-1)r$ numbers, (\ref{eq:cond-1}) follows.

Now observe that
\begin{equation} \label{eq:cond-2} \Pr[x_1^l = x_1^0 \mid x_1^{1},\ldots,x_1^{l-1} \text{ are good}] \leq \frac{1}{n-(l-1)r}.\end{equation}
Indeed, condition on the event that $x_1^{1},\ldots,x_1^{l-1}$ are good. If $x_1^l = x_1^0$, then $x_1^0$ is in the $r$-cycle of $\tau_{j_{l}}$ revealed for the first time at step $l$ (here, we use the fact that $x_1^{l-1}$ is good); indeed, $x_1^0$ is the $a_l$th number in this $r$-cycle, after $x_1^{l-1}$. Since the $a_l$th number after $x_1^{l-1}$ in the $r$-cycle of $\tau_{j_l}$ revealed (for the first time) at step $l$, is chosen uniformly from a set of at least $n-(l-1)r$ numbers, (\ref{eq:cond-2}) follows.

Combining (\ref{eq:cond-1}) and (\ref{eq:cond-2}), and using a union bound, we obtain
$$\Pr[x_1^{l} = x_1^0] \leq \sum_{i=1}^{l-1} \frac{(i-1)r}{n-(i-1)r} + \frac{1}{n-(l-1)r} \leq \frac{l^2r}{n-lr}.$$

Now we condition on the event $\{x_1^{l} = x_1^0\}$. We pick $x_2$ not in any of the $r$-cycles we have previously exposed, and repeat the argument. In fact, let $m \geq 2$, suppose we have done this $m-1$ times already, condition on the event $E :=\{x_{m'}^l = x_{m'}^0\ \forall m' <m\}$, and suppose that we have just chosen $x_m$ not in any of the $r$-cycles we have previously exposed. Define $x_m^0,\ldots,x_m^l$ as above. For each $i \in [l-1]$, let us say that $x_m^i$ is {\em good} if the $r$-cycle of $\tau_{j_{i+1}}$ containing $x_m^i$ has not been revealed at any prior step (including steps involving $x^{i'}_{m'}$, for $m' < m$). Otherwise, we say that $x_m^i$ is {\em bad}. Arguing as above, we obtain
$$\Pr[x_m^i \text{ is bad} \mid \{x_m^{1},\ldots,x_m^{i-1} \text{ are good}\} \cap E] \leq \frac{(m-1)lr+(i-1)r}{n-(m-1)lr-(i-1)r}$$
for all $i \in [l-1]$, and
$$\Pr[x_m^l =x_m^0 \mid \{x_m^{1},\ldots,x_m^{l-1} \text{ are good}\} \cap E] \leq \frac{1}{n-(m-1)lr-(l-1)r},$$
so by a union bound, we obtain
$$\Pr[x_m^l =x_m^0 \mid E] \leq \frac{ml^2r}{n-mlr}.$$
In order to have $W = \text{Id}$, we must have $x_{s}^l = x_{s}^0$ for all $s \in [m]$. Thus,
$$\Pr[W = \text{Id}] \leq \prod_{s=1}^{m} \Pr[x_{s}^l = x_{s}^0 \mid \{x_{t}^{l} = x_t^{0}\ \forall t < s\}] \leq \left(\frac{ml^2r}{n-mlr}\right)^m.$$
Choosing $m = \lfloor n/(4l^2r)\rfloor$ yields
$$\Pr[W = \text{Id}] \leq 2^{-n/(4l^2r)+1}.$$

The number of choices for the word on the right-hand side of (\ref{eq:irreducibleword}) is at most $(d-1)^{l} (r-1)^l$. (By taking a cyclic shift if necessary, we may assume that $j_2 \neq d$, so there are at most $d-1$ choices for $j_2$, and at most $d-1$ choices for all subsequent $j_i$; there are clearly at most $r-1$ choices for each $m_i$.) Hence, the probability that there exists such a word which evaluates to the identity permutation is at most
$$(d-1)^{l} (r-1)^l 2^{-n/(4l^2r)+1}.$$
To bound the probability that $H$ has a cycle of length less than $g$, we need only sum the above expression over all $l < g$:
\begin{align*} \Pr[\textrm{girth}(H) < g] & \leq \sum_{l=3}^{g-1} (d-1)^{l} (r-1)^l 2^{-n/(4l^2r)+1}\\
& < 2(d-1)^{g} (r-1)^g 2^{-n/(4g^2r)}.\end{align*}
In order for the right-hand side to tend to zero as $n \to \infty$, it suffices to choose
$$g \leq c_0 \left(\frac{n}{r(\log(d-1)+\log(r-1))}\right)^{1/3},$$
for some absolute constant $c_0 >0$. This completes the proof of Claim \ref{claim:girth}, and thus proves Theorem \ref{thm:sym}.
\end{proof}
\end{proof}

\section{Conclusion and open problems}\label{sec:open}
Our best (general) upper and lower bounds on the function $g_{r,d}(n)$ differ approximately by a factor of 2:
$$(1+o(1))\frac{\log n}{\log(d-1)+\log(r-1)} \leq g_{r,d}(n) \leq (2+o(1))\frac{\log n}{\log(r-1)+\log(d-1)}.$$
It would be of interest to narrow the gap, possibly by means of an explicit algebraic construction {\em \`a la} Ramanujan graphs.

In \cite{ghssv}, Gamburd, Hoory, Shahshahani, Shalev and Vir\'ag conjecture that with high probability, a Cayley graph on $S_n$ generated by $d$ random permutations has girth at least $\Omega(n \log n)$; one may compare this to the best known lower bound, which is $\Omega(n^{1/3})$, in \cite{eberhard}. We believe that the random hypergraph of Theorem \ref{thm:sym} also has girth $\Omega(n \log n)$, with high probability.

In this paper, we considered a very simple and purely combinatorial notion of girth in hypergraphs, but other notions appear in the literature, for example using the language of simplicial topology, such as in \cite{lub:mesh, goff}. A different combinatorial definition was introduced by Erd\H{o}s in \cite{erdos:sts}. Define the $(-2)$-girth of a $3$-uniform hypergraph as the smallest integer $g \geq 4$ such that there is a set of $g$ vertices spanning at least $g-2$ edges. Erd\H{o}s conjectured in \cite{erdos:sts} that there exist Steiner Triple Systems with arbitrarily high $(-2)$-girth; this question remains wide open (see for example \cite{beezer}), and seems very hard.  In view of this, we raise the following.

\begin{question}
\label{question:quad}
Is there a constant $c>0$ such that there exist $n$-vertex $3$-uniform hypergraphs with $cn^2$ edges and arbitrarily high $(-2)$-girth?
\end{question}

Note that Erd\H{o}s' conjecture on Steiner Triple Systems, if true, would imply a positive answer for every $c<\frac 16$. This is clearly tight, since an $n$-vertex, 3-uniform hypergraph with at least $n^2/6$ edges cannot be linear,\footnote{If $H$ is a linear, $n$-vertex, 3-uniform hypergraph, then any pair of vertices is contained in at most one edge of $H$, so double-counting the number of times a pair of vertices in contained in an edge of $H$, we obtain $3e(H) \leq {n \choose 2}$.} and therefore has $(-2)$-girth 4. 
%It is easy to see that any $n$-vertex, 3-uniform hypergraph with at least $n^2/3$ edges %must contain a set of $g$ vertices spanning at least $g-2$ edges for {\em every} $g \in %\{4,5,\ldots,n\}$, so necessarily $c < 1/3$.

We turn briefly to some variants of Erd\H{o}s' definition. The celebrated $(6,3)$-theorem of Ruzsa and Szemer\'edi~\cite{ruzs:sz} states that if $H$ is an $n$-vertex, $3$-uniform hypergraph in which no 6 vertices span 3 or more edges, then $H$ has $o(n^2)$ edges. Therefore, if we define the $(-3)$-girth of a 3-uniform hypergraph to be the smallest integer $g \geq 6$ such that there exists a set of $g$ vertices spanning at least $g-3$ edges,\footnote{The condition $g \geq 6$ is necessary to avoid triviality: if we replaced it with $g \geq 5$, then a 3-uniform hypergraph would have $(-3)$-girth 5 unless it consisted of isolated edges.} then an $n$-vertex, 3-uniform hypergraph with $(-3)$-girth at least 7 has $o(n^2)$ edges. Hence, the analogue of Question \ref{question:quad} for $(-3)$-girth has a negative answer. On the other hand, if we define the $(-1)$-girth of a 3-uniform hypergraph to be the smallest integer $g$ such that there exists a set of $g$ vertices spanning at least $g-1$ edges, it can be shown that the maximum number of edges in an $n$-vertex, 3-uniform hypergraph with $(-1)$-girth at least $g$, is $n^{2+\Theta(1/g)}.$ \begin{comment} We will discuss these and related problems more fully in a forthcoming note \cite{note}. \end{comment}

\section{Acknowledgment}
N. L. wishes to thank Shlomo Hoory for many years of joint pursuit of the girth problem~\cite{hoory}. Quite a few of the ideas of this present paper can be traced back to that joint research effort.

\end{document}